\newtheorem{theorem}{Theorem}
\newdefinition{remark}{Remark}
\newproof{proof}{Proof}
\begin{document}
\begin{frontmatter}
\title{Controllability properties for some semilinear parabolic PDE with
a quadratic gradient term \tnoteref{t1}}
\tnotetext[t1]{Partially supported by
Spanish Ministry of Science and Innovation under grant MTM2011-22711.}
\author{Luis A. Fern\'andez}
\ead{lafernandez@unican.es}
\address{Dep. Matem\'aticas, Estad\'{\i}stica y Computaci\'on, Universidad de
Cantabria. \\ Avda. de los Castros, s/n, 39071, Santander (Spain).}

\begin{abstract}  We study several controllability properties
for some semilinear parabolic PDE with a quadratic gradient term.
For internal distributed controls, it is shown that the system is
approximately and null controllable. The
proof relies on the Cole-Hopf transformation. The same approach is used to deal with
initial controls.
\end{abstract}

\begin{keyword}
Controllability, quadratic gradient term.
\end{keyword}

%\begin{AMS}
%93B05, 35K55.
%\end{AMS}

\end{frontmatter}

\section{Introduction}

We study some controllability properties for the following
semilinear parabolic boundary value problem

\begin{equation}
\left\{
\begin{array}{rcll}
y_t(x,t) - \Delta y(x,t) & =  & |\nabla y(x,t)|^2 \cdot
 \phi(y(x,t)) + u(x,t)\chi_{\omega}(x) & \mbox{in } Q_T,
\\ y(x,t) & = & 0   & \mbox{on }
\Sigma_T,  \\ y(x,0) & =  & y_0(x)  & \mbox{in } \Omega,
\end{array}
\right. \label{E1}
\end{equation}
where $Q_T = \Omega \times (0,T)$ and $ \Sigma_T = \Gamma \times
(0,T)$, $\Omega$ being an open bounded domain of $\mathbb R^n$,
with boundary $\Gamma$ of class $C^2$ and $T$ a positive (fixed)
real number. Moreover, $\omega$ is a nonempty open subset of
$\Omega$ (possibly small), $\chi_{\omega}$ is the characteristic function of $\omega$ and
$\phi$ is a real function. As usual, we will denote $y_t(x,t) =
\frac{\partial y}{\partial t}(x,t),$ $\nabla y(x,t) = \left(
\frac{\partial y}{\partial x_1}(x,t),\ldots,\frac{\partial
y}{\partial x_n}(x,t)\right)$ and $\Delta y = \sum_{i =
1}^n \frac{\partial^2 y}{\partial x_i^2}(x,t)$.

Given $T > 0$ fixed, it is said that problem (\ref{E1}) is
approximately controllable in $L^2(\Omega)$ at time $T$ by using
controls in ${\cal U}$ if, for each initial condition $y_0$ in
certain space, $y_d \in L^2(\Omega)$ and $\epsilon
> 0$, there exists $u \in {\cal U}$ and a solution $y$ of the 
problem (\ref{E1}) that satisfies $ \|y(\cdot,T) - y_d\|_{L^2(\Omega)} <
\epsilon$. Furthermore, it is said that problem (\ref{E1}) is null
controllable at time $T$ by using controls in ${\cal U}$ when for
each initial condition $y_0$, there exists $u \in {\cal U}$ and 
a solution $y$ of the problem (\ref{E1}) satisfying $y(x,T) = 0$ in
$\Omega$. 

In the last years, there has been a great interest in the
controllability of parabolic systems with internal distributed
control (among others, see \cite{Barbu00}, \cite{DFCGBZ02}--\cite{Khapa01} and the
references therein). Mainly, these works try to characterize the
class of parabolic problems having the mentioned controllability
properties. By now, it is well known that the approximate controllability
holds for most of the linear parabolic PDE. In the nonlinear case, 
many results are related with the system

\begin{equation}
\left\{
\begin{array}{rcll}
y_t(x,t) - \Delta y(x,t) + f(y(x,t),\nabla y(x,t)) & =  & u(x,t)\chi_{\omega}(x) & \mbox{in }
Q_T,
\\ y(x,t) & = & 0   & \mbox{on }
\Sigma_T,  \\ y(x,0) & =  & y_0(x)  & \mbox{in } \Omega.
\end{array}
\right. \label{E2}
\end{equation}

Roughly speaking, the situation can be described as follows:

\begin{itemize}
\item for $n=1$ and $f(y,y_x) = y y_x$ (i.e. Burgers equation), it
is known that the system (\ref{E2}) is not approximately
controllable in $L^2(\Omega)$ at time $T$ by using controls in
$L^2(\omega \times (0,T))$, see \cite[Theorem 6.3, Chapter
I]{Fursi-Imanu96}. \item for functions $f(y,\nabla y)$ globally
Lipschitz with respect to $(y,\nabla y)$, the
system (\ref{E2}) is approximately controllable in
$H_0^\rho(\Omega)$ at time $T$ for all $\rho \in [0,1)$, see \cite[Theorem 3.3]{Fdez-Zuazua99},
and null controllable at time $T$ (when $f(0,0)=0$), see \cite{Imanu-Yamamo03}, by using controls in
$L^2(\omega \times (0,T))$.  \item
for functions $f(y,\nabla y)$ growing slower than
\begin{equation}
|y|\log^{3/2} {(1+|y|+|\nabla y|)} + |\nabla y|\log^{1/2}
{(1+|y|+|\nabla y|)},
\label{E3}
\end{equation} as $|(y,\nabla y)|
\rightarrow +\infty$, the system (\ref{E2}) is
null (when $f(0,0)=0$) and approximately controllable in $L^2(\Omega)$ at time $T$,
by using controls in $L^\infty(\omega \times (0,T))$, see
\cite{DFCGBZ02}.
\end{itemize}

In general, the proofs of these results are quite technical and
involve Carleman estimates together with sharp parabolic
regularity results.

The main contribution of this paper is presented in Theorem \ref{T1}, where it is shown that (\ref{E1})
is approximately controllable in $L^2(\Omega)$ and null
controllable at time $T$ by using controls in $L^\infty(\omega \times
(0,T))$. 

\section{Internal distributed control}

Let us begin by recalling some spaces that appear commonly in the
framework of parabolic problems:
\[ W^{1,1}_{2,0}(Q_T) = \{y \in L^2(0,T;H_0^1(\Omega)) : y_t \in
L^2(Q_T)\}, \]
\[ W^{2,1}_{2,0}(Q_T) = \{y \in L^2(0,T;H^2(\Omega)
\cap H_0^1(\Omega)) : y_t \in L^2(Q_T) \}. \]

When $\Gamma$ is $C^2$, it is known that $W^{2,1}_{2,0}(Q_T) = \{y
\in L^2(0,T;H_0^1(\Omega)) : y_t \in L^2(Q_T), \ \Delta y \in
L^2(Q_T) \}$ (see \cite[p. 109 and 113]{Lad85}). It is also well known that
\[ W^{1,1}_{2,0}(Q_T) \subset C([0,T];L^2(\Omega)), \ \  \ W^{2,1}_{2,0}(Q_T) \subset C([0,T];H_0^1(\Omega)), \]
with continuous imbeddings.

A crucial assumption along this paper is the following:
\begin{itemize}
\item [{\bf (H)}] $\phi : \mathbb R \longrightarrow \mathbb R$ is
a continuous function and there exists a real number $\alpha \leq
0$ such that
\[ \int_{0}^r \phi(s) ds \geq \alpha, \ \ \forall r \in \mathbb R.\]
\end{itemize}

Now, let us introduce the function $\varphi: \mathbb R \longrightarrow \mathbb R$ given by
\begin{equation}
\varphi(r) = \int_0^r \exp{\left(\int_{0}^v \phi(s) ds \right)}
dv, \ \ \forall r \in \mathbb R. \label{E10}
\end{equation}

Under condition {\bf (H)}, it is straightforward to show that $\varphi$ is an strictly
increasing $C^2$ function with range equals to $\mathbb R$, thanks to 
\begin{equation}
\varphi'(r) = \exp{\left(\int_{0}^r \phi(s) ds \right)} \geq
\exp{(\alpha)}> 0, \ \  \mbox{ and } \ \ \varphi''(r) = \phi(r)\varphi'(r), \ \ \forall r \in \mathbb R.
\label{E20}
\end{equation}

We will consider the problem (\ref{E1}) with control $u \in L^\infty(Q_T)$ and initial condition $y_0 \in H_0^1(\Omega) \cap L^\infty(\Omega)$. A function $y \in W^{1,1}_{2,0}(Q_T) \cap L^\infty(Q_T)$ is said to be a solution of (\ref{E1}) if it verifies its PDE in the distribution sense and the initial condition in $L^2(\Omega)$. 

Let us show that the problem (\ref{E1}) can be transformed into a semilinear one by using the Cole-Hopf transformation (\ref{E10}):
given a solution $y$ of problem (\ref{E1}), we define a new function given by 
\begin{equation}
z(x,t) = \varphi(y(x,t)). \label{E35}
\end{equation}
It is easy to show that $z \in W^{2,1}_{2,0}(Q_T) \cap
L^\infty(Q_T)$, because $\varphi(0) = 0$ and
\begin{equation}
z_t(x,t) = \varphi'(y(x,t)) \cdot y_t(x,t), \ \ \nabla z(x,t) =
\varphi'(y(x,t)) \cdot \nabla y(x,t). \label{E40}
\end{equation}
Furthermore, taking into account (\ref{E20}), it
follows that (in the distribution sense) 
\[ \Delta z(x,t) = \varphi''(y(x,t)) |\nabla y(x,t)|^2+
\varphi'(y(x,t)) \Delta y(x,t) \]
\begin{equation}
=\varphi'(y(x,t)) \left(\phi(y(x,t)) |\nabla y(x,t)|^2 + \Delta
y(x,t) \right) \label{E50}
\end{equation}
\[ =\varphi'(y(x,t))\left(y_t(x,t) - u(x,t)\chi_{\omega}(x)\right)
= z_t(x,t) -
u(x,t)\chi_{\omega}(x)\varphi'(y(x,t)).\]

Therefore, $z$ can be viewed as a solution in $W^{2,1}_{2,0}(Q_T)
\cap L^\infty(Q_T)$ of the semilinear problem

\begin{equation}
\left\{
\begin{array}{rcll}
z_t(x,t) - \Delta z(x,t) & =  &  u(x,t)\chi_{\omega}(x) \cdot
\varphi'(\varphi^{-1}(z(x,t)))  & \mbox{in } Q_T,
\\ z(x,t) & = & 0   & \mbox{on }
\Sigma_T,  \\ z(x,0) & =  & \varphi(y_0(x))  & \mbox{in } \Omega.
\end{array}
\right. \label{E60}
\end{equation}

\begin{remark}
The existence of solution for problem (\ref{E1}) has been studied in many papers, even for less 
regular data $u$ and $y_0$ (see \cite[Theorem 5.6]{ADP08}), and 
for more general operators (see \cite[Theorem 2.2]{DGS07}). Let us stress that we are dealing with more regular solutions than those obtained in these works. On the other hand, it was proved in  \cite{ADP08} that the problem (\ref{E1}) admits infinitely many weak solutions. Clearly, this nonuniqueness result is not relevant from the controllability view point, where only one solution satisfying the required conditions is required.
\end{remark}

Now, we can take advantage of the Cole-Hopf transformation in order to obtain
the main result of this section

\begin{theorem}
Let us assume condition {\bf (H)} and
$y_0 \in H_0^1(\Omega) \cap L^\infty(\Omega)$. Then, the system
(\ref{E1}) is approximately controllable in $L^2(\Omega)$ and null controllable at time
$T$ by using controls in $L^\infty(\omega \times (0,T))$. \label{T1}
\end{theorem}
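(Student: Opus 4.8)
The plan is to reduce the controllability of the quasilinear problem (\ref{E1}) to that of the semilinear problem (\ref{E60}) via the Cole-Hopf transformation $z = \varphi(y)$, which by (\ref{E35})--(\ref{E50}) is a bijection between solutions of (\ref{E1}) with initial datum $y_0$ and control $u$ and solutions of (\ref{E60}) with initial datum $\varphi(y_0)$ and the same control $u$ (entering now through the coefficient $\varphi'(\varphi^{-1}(z))$). Since $\varphi$ is a strictly increasing $C^2$ bijection of $\mathbb R$ onto $\mathbb R$ by (\ref{E20}), the map $y \mapsto \varphi(y)$ is a homeomorphism of $L^2(\Omega)$ onto itself (and likewise on $L^\infty$), so $y(\cdot,T) \to y_d$ in $L^2(\Omega)$ is equivalent to $z(\cdot,T) \to \varphi(y_d)$, and $y(\cdot,T) = 0$ is equivalent to $z(\cdot,T) = 0$ (using $\varphi(0)=0$). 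Thus it suffices to prove that (\ref{E60}) is approximately controllable to the target $z_d := \varphi(y_d) \in L^2(\Omega)$ and null controllable, with controls in $L^\infty(\omega\times(0,T))$, starting from $z_0 := \varphi(y_0) \in H_0^1(\Omega)\cap L^\infty(\Omega)$.

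For the semilinear problem (\ref{E60}), the key observation is that the nonlinearity appears only multiplying the control, and the ``free'' dynamics $z_t - \Delta z = 0$ is just the heat equation. I would argue as follows: pick any function $\hat z \in W^{2,1}_{2,0}(Q_T)\cap L^\infty(Q_T)$ with $\hat z(\cdot,0) = z_0$ and the desired terminal behaviour — for null controllability, choose $\hat z$ vanishing near $t = T$ (e.g. $\hat z(x,t) = \theta(t)\, S(t)z_0(x)$ with $\theta$ a smooth cutoff equal to $1$ near $0$ and $0$ near $T$, where $S(t)$ is the heat semigroup, noting $S(t)z_0$ is bounded since $z_0 \in L^\infty$); for approximate controllability, choose $\hat z$ smooth with $\hat z(\cdot,0) = z_0$ and $\hat z(\cdot,T)$ equal to (or close to) $z_d$, supported appropriately so it is regular enough. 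Then set
\begin{equation}
u(x,t) := \frac{\bigl(\hat z_t(x,t) - \Delta \hat z(x,t)\bigr)\chi_{\omega}(x)}{\varphi'(\varphi^{-1}(\hat z(x,t)))}.
\end{equation}
Because $\varphi'(\varphi^{-1}(\cdot)) \geq \exp(\alpha) > 0$ is bounded below and $\hat z$ is chosen so that $\hat z_t - \Delta \hat z$ is bounded and supported in $\omega \times (0,T)$, this $u$ lies in $L^\infty(\omega\times(0,T))$; and by construction $z = \hat z$ solves (\ref{E60}) with this control. Hence we reach $z(\cdot,T) = \hat z(\cdot,T)$ exactly, which is $0$ for null controllability and $z_d$ (hence within $\epsilon$ after undoing $\varphi$, or even exactly) for the approximate case.

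The subtlety — and the step I expect to require the most care — is the regularity bookkeeping: one must exhibit $\hat z$ for which $\hat z_t - \Delta \hat z$ is genuinely in $L^\infty(Q_T)$ and supported in the control region $\omega \times (0,T)$, while keeping $z_0 \in H_0^1 \cap L^\infty$ only. The natural cutoff construction $\hat z = \theta(t) S(t) z_0$ produces $\hat z_t - \Delta \hat z = \theta'(t) S(t) z_0$, which is supported in $t$ but \emph{not} in $x$; to localize in $\omega$ one instead interpolates: use that the heat equation is null controllable with $L^\infty$ controls supported in $\omega$ for the linear problem (a classical fact, or one can invoke that $S(t)z_0 \to 0$ and glue), or more simply, one first drives $z$ to a state supported away from $\partial\Omega$ and then uses a localized construction. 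The cleanest route is: let $v \in L^\infty(\omega\times(0,T))$ be a control steering the \emph{heat equation} from $z_0$ to $0$ (resp. arbitrarily close to $z_d$) — such a control exists in $L^\infty$ by known results (e.g. \cite{DFCGBZ02} applied to the linear case) — and let $w$ be the corresponding heat solution; then define $u := v / \varphi'(\varphi^{-1}(w))$, which is still in $L^\infty(\omega\times(0,T))$ by the lower bound $\varphi' \geq e^\alpha$, and check that $z := w$ solves (\ref{E60}) with control $u$. Either way, the crux is simply that dividing an $L^\infty$ heat control by the strictly positive bounded-below factor $\varphi'(\varphi^{-1}(z))$ keeps it in $L^\infty$, so nonlinearity causes no loss — everything reduces to the well-understood linear heat equation, and then Cole-Hopf transfers the conclusion back to (\ref{E1}).
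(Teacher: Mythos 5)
Your final ``cleanest route'' is exactly the paper's proof: take an $L^\infty$ control $v$ for the \emph{linear} heat equation started from $\varphi(y_0)$ (using approximate/null controllability of the heat equation with bounded controls), let $w$ be the corresponding heat solution, set $u = v/\varphi'(\varphi^{-1}(w))$ and $y=\varphi^{-1}(w)$, and use $\varphi'\geq e^{\alpha}$ to keep $u$ in $L^\infty(\omega\times(0,T))$. The one inaccuracy is the claim that $y\mapsto\varphi(y)$ is a homeomorphism of $L^2(\Omega)$ onto itself: condition \textbf{(H)} bounds $\varphi'$ from below but not from above, so $\varphi$ can grow superlinearly (e.g.\ $\phi(y)=e^y$) and need not map $L^2(\Omega)$ into $L^2(\Omega)$; in particular $\varphi(y_d)$ may fail to be an admissible $L^2$ target for the linear problem when $y_d\in L^2(\Omega)$ is unbounded. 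This is easily repaired, as in the paper, by first reducing via density to targets $y_d\in L^\infty(\Omega)$ and then using only the global Lipschitz bound $\|(\varphi^{-1})'\|_\infty\leq e^{-\alpha}$ --- which you do have --- to convert $\|z(\cdot,T)-\varphi(y_d)\|_{L^2(\Omega)}\leq\epsilon\, e^{\alpha}$ into $\|y(\cdot,T)-y_d\|_{L^2(\Omega)}\leq\epsilon$.
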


\begin{proof}
Let us begin by proving the approximate controllability property
for (\ref{E1}). Due to the density of $L^\infty(\Omega)$ in
$L^2(\Omega)$, it is enough to approximate any element $y_d \in
L^\infty(\Omega)$. Given $\epsilon
> 0$, by the approximate controllability
for the Heat equation (see \cite[section 10, chapter III]{Lions71} and \cite{Lions90}), we can guarantee the
existence of a control $v \in L^\infty(Q_T)$ such
that the unique solution $z$ in $W^{2,1}_{2,0}(Q_T) \cap
L^\infty(Q_T)$ (see \cite[p. 112]{Lad85} and
\cite[Theorem 7.1, p. 181 and Corollary 7.1, p. 186]{Lad-Sol-Ura68}) of the linear problem

\begin{equation}
\left\{
\begin{array}{rcll}
z_t(x,t) - \Delta z(x,t) & =  &  v(x,t)\chi_{\omega}(x) &
\mbox{in } Q_T,
\\ z(x,t) & = & 0   & \mbox{on }
\Sigma_T,  \\ z(x,0) & =  & \varphi(y_0(x))  & \mbox{in } \Omega,
\end{array}
\right. \label{E70}
\end{equation}
satisfies $\|z(\cdot,T) - \varphi(y_d)\|_{L^2(\Omega)} \leq \epsilon
\cdot \exp{(\alpha)},$ where $\alpha$ is taken from {\bf (H)}.
Inspired by the argumentation developed at the beginning of this
section (that we are reversing now), we define
\begin{equation}
y(x,t) = \varphi^{-1}(z(x,t)) \ \ \mbox{ in } \
Q_T.\label{E75}
\end{equation} Of
course, $y$ is well defined: $\varphi^{-1}(s)$ exists for all $s
\in \mathbb R$, because $\varphi$ is an strictly increasing
function with range equals to $\mathbb R$. In fact, thanks to
condition {\bf (H)}, we know that $\varphi^{-1}(s)$ is a globally
Lipschitz increasing $C^2$ function with
\begin{equation}
\frac{d \varphi^{-1}}{ds}(s) = \frac{1}{\varphi'(\varphi^{-1}(s))}
\in (0,\exp{(-\alpha)}], \ \ \frac{d^2 \varphi^{-1}}{ds^2}(s) =
\frac{-\varphi''(\varphi^{-1}(s))}{(\varphi'(\varphi^{-1}(s)))^3}.
\label{E80}
\end{equation}

Furthermore, it is easy to show that $y \in W^{1,1}_{2,0}(Q_T)
\cap L^\infty(Q_T),$ combining that $z
\in W^{2,1}_{2,0}(Q_T) \cap L^\infty(Q_T)$, $\varphi(0) = 0$ and
\begin{equation}
y_t(x,t) = \frac{d \varphi^{-1}}{ds}(z(x,t)) \cdot z_t(x,t), \ \
\nabla y(x,t) = \frac{d \varphi^{-1}}{ds}(z(x,t)) \cdot \nabla
z(x,t). \label{E90}
\end{equation}
Taking into account (\ref{E20}) and (\ref{E70})--(\ref{E90}), the next equalities 
follow in the distribution sense:
\[\Delta y(x,t) = \frac{d^2 \varphi^{-1}}{ds^2}(z(x,t)) |\nabla
z(x,t)|^2+ \frac{d \varphi^{-1}}{ds}(z(x,t)) \Delta z(x,t) \]
\[= \frac{-\varphi''(y(x,t))}{(\varphi'(y(x,t)))^3}|\nabla z(x,t)|^2
+ \frac{\Delta z(x,t)}{\varphi'(y(x,t))} = -\phi(y(x,t))\frac{|\nabla z(x,t)|^2}{(\varphi'(y(x,t)))^2} \]
\[ + \frac{z_t(x,t) - v(x,t)\chi_\omega(x)}{\varphi'(y(x,t))} = -\phi(y(x,t))|\nabla y(x,t)|^2 + y_t(x,t) - u(x,t)\chi_\omega(x), \]
where we have selected the control

\begin{equation}
u(x,t) = \frac{v(x,t)}{\varphi'(y(x,t))}.
\label{E105}
\end{equation}

Obviously, $u$ belongs to $L^\infty(Q_T)$, thanks
to (\ref{E20}):
\[ \|u\|_{L^{\infty}(Q_T)} \leq \|v\|_{L^{\infty}(Q_T)} \cdot \exp{(-\alpha)}. \]

This means that $y$ is a solution of problem (\ref{E1}). Finally, combining the
Mean Value Theorem, (\ref{E20}) and (\ref{E80}), we obtain
\[ \|y(\cdot,T) -
y_d\|_{L^2(\Omega)} = \|\varphi^{-1}(z(\cdot,T)) -
\varphi^{-1}(\varphi(y_d))\|_{L^2(\Omega)} \]
\[ = \left(\int_{\Omega} \left|\frac{d \varphi^{-1}}{ds}(\theta(x))\right|^2
|z(x,T)-\varphi(y_d(x))|^2 dx\right)^{1/2}  \leq \exp{(-\alpha)}
\|z(\cdot,T) - \varphi(y_d)\|_{L^2(\Omega)} \leq \epsilon,\] where
$\theta(x)$ denotes some intermediate value between $z(x,T)$ and
$\varphi(y_d(x))$. This is exactly what we were looking for.

The proof of the null controllability property can be seen as a
particular case of the previous argumentation, taking $\epsilon =
0$, $y_d = 0$ and applying the null controllability result for the
Heat equation with bounded controls, see \cite[Theorem 3.1]{DFCGBZ02} and also \cite{Leb-Rob95}.

\end{proof}

Of course, there exist many continuous functions $\phi$ verifying
condition {\bf (H)}. Typical examples are $\phi(y) = \exp{(y)}$ (with $\alpha =-1$) and $\phi(y) = y^{2k+1}$ for any natural number $k$ (with $\alpha =0$). More generally, $\phi(y)$ can be any polynomial with highest
term of odd order and positive main coefficient. It is also clear that some other usual functions do not verify
condition {\bf (H)}, like $\phi(y) = y^{2k}$ for any natural
number $k$. These cases deserve a specific treatment: for instance, the case $\phi(y) = 1$ was studied in \cite{Lei-Li-Lin08}. 

From Theorem \ref{T1} it follows that the hypotheses assumed in \cite{DFCGBZ02} (see (\ref{E3})) are far
from being necessary to derive the controllability properties, because clearly they are not satisfied in our framework. 

\section{Initial control}

Previous argumentation can be also applied
when the control is acting through the initial condition, like in the problem:

\begin{equation} \left\{
\begin{array}{rcll}
y_t(x,t) - \Delta y(x,t) & =  & |\nabla y(x,t)|^2 \cdot
 \phi(y(x,t)) & \mbox{in } Q_T,
\\ y(x,t) & = & 0   & \mbox{on }
\Sigma_T,  \\ y(x,0) & =  & u(x)  & \mbox{in } \Omega.
\end{array}
\right. \label{E5}
\end{equation}

Similarly to the previous case, it is said that problem (\ref{E5}) is
approximately controllable in $L^2(\Omega)$ at time $T$ by using
initial controls $u$ in certain space ${\cal U}$ if, for each $y_d \in L^2(\Omega)$ and $\epsilon
> 0$, there exists $u \in {\cal U}$ and a solution $y$ of the 
problem (\ref{E5}) that satisfies $ \|y(\cdot,T) - y_d\|_{L^2(\Omega)} <
\epsilon$.

The following result can be proved in this context:

\begin{theorem}
Let us assume condition {\bf (H)}. Then, the
system (\ref{E5}) is approximately controllable in $L^2(\Omega)$
at time $T$ by using controls $u \in H_0^1(\Omega) \cap
L^\infty(\Omega)$. \label{T3}
\end{theorem}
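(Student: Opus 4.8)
The plan is to mimic the proof of Theorem~\ref{T1}, again exploiting the Cole-Hopf transformation (\ref{E10}), but now transferring the initial control to the linear heat equation. Given $y_d \in L^\infty(\Omega)$ (which suffices by density of $L^\infty(\Omega)$ in $L^2(\Omega)$) and $\epsilon > 0$, I would first invoke the approximate controllability of the heat equation \emph{by means of initial data}: there exists an initial datum $w_0 \in H_0^1(\Omega) \cap L^\infty(\Omega)$ such that the unique solution $z \in W^{2,1}_{2,0}(Q_T) \cap L^\infty(Q_T)$ of
\begin{equation}
\left\{
\begin{array}{rcll}
z_t(x,t) - \Delta z(x,t) & = & 0 & \mbox{in } Q_T,
\\ z(x,t) & = & 0 & \mbox{on } \Sigma_T, \\ z(x,0) & = & w_0(x) & \mbox{in } \Omega,
\end{array}
\right. \label{Enew}
\end{equation}
satisfies $\|z(\cdot,T) - \varphi(y_d)\|_{L^2(\Omega)} \leq \epsilon \cdot \exp{(\alpha)}$. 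This is essentially the backward uniqueness / density of the range of the heat semigroup; one can take $w_0$ smooth and bounded since $\varphi(y_d) \in L^\infty(\Omega)$, using the standard approximate controllability results already cited (\cite[section 10, chapter III]{Lions71}, \cite{Lions90}).

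Next I would set $u(x) = \varphi^{-1}(w_0(x))$ and $y(x,t) = \varphi^{-1}(z(x,t))$ in $Q_T$. Since $\varphi^{-1}$ is globally Lipschitz, increasing and $C^2$ by (\ref{E80}), and $\varphi^{-1}(0) = 0$, the chain-rule computations (\ref{E90}) show $y \in W^{1,1}_{2,0}(Q_T) \cap L^\infty(Q_T)$ and $u \in H_0^1(\Omega) \cap L^\infty(\Omega)$. Repeating verbatim the distributional identity for $\Delta y$ from the proof of Theorem~\ref{T1} (with $v \equiv 0$, hence no control term), one checks that $y$ solves (\ref{E5}) with initial datum $y(\cdot,0) = \varphi^{-1}(z(\cdot,0)) = \varphi^{-1}(w_0) = u$. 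Finally, the Mean Value Theorem together with the bound $\frac{d\varphi^{-1}}{ds} \leq \exp{(-\alpha)}$ from (\ref{E80}) yields
\[ \|y(\cdot,T) - y_d\|_{L^2(\Omega)} = \|\varphi^{-1}(z(\cdot,T)) - \varphi^{-1}(\varphi(y_d))\|_{L^2(\Omega)} \leq \exp{(-\alpha)}\|z(\cdot,T) - \varphi(y_d)\|_{L^2(\Omega)} \leq \epsilon, \]
which is the desired estimate.

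The only genuinely new ingredient compared with Theorem~\ref{T1} is the first step: one needs approximate controllability of the heat equation acting on the initial condition, together with the assurance that the controlling initial datum can be chosen in $H_0^1(\Omega) \cap L^\infty(\Omega)$ (so that $\varphi^{-1}(w_0)$ lands in the prescribed control space and $z$ has the regularity $W^{2,1}_{2,0}(Q_T) \cap L^\infty(Q_T)$ needed for the Cole-Hopf manipulations). I expect this to be the main point to justify carefully: approximate controllability via initial data for the heat equation is classical (it is equivalent to the density of $\{S(T)w_0 : w_0 \in H_0^1 \cap L^\infty\}$ in $L^2(\Omega)$, which follows from the backward uniqueness of the heat semigroup and a density argument), but one must make sure the approximating datum inherits the required boundedness and $H_0^1$-regularity — this can be arranged by first approximating $\varphi(y_d)$ in $L^2(\Omega)$ by a smooth compactly supported function and then using that the heat semigroup has dense range. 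Everything else is a line-by-line transcription of the earlier proof with the control term switched off.
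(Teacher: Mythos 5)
Your proposal is correct and follows essentially the same route as the paper: approximate controllability of the heat equation through the initial datum (the paper cites \cite[Th. 11.2, pg. 215]{Lions71} for this, which is exactly the density-of-range/backward-uniqueness fact you sketch), followed by the inverse Cole--Hopf substitution $u=\varphi^{-1}(w_0)$, $y=\varphi^{-1}(z)$ and the Mean Value Theorem estimate. Nothing essential is missing.
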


\begin{proof}
Given any element $y_d \in L^\infty(\Omega)$ and $\epsilon
> 0$, by the approximate controllability
proved in \cite[Th. 11.2, pg. 215]{Lions71}, we can guarantee the existence of a
control $v \in H_0^1(\Omega) \cap L^\infty(\Omega)$ such that the
unique solution $z$ in $W^{2,1}_{2,0}(Q_T) \cap L^\infty(Q_T)$
of the linear problem 

\begin{equation}
\left\{
\begin{array}{rcll}
z_t(x,t) - \Delta z(x,t) & =  &  0  & \mbox{in } Q_T,
\\ z(x,t) & = & 0   & \mbox{on }
\Sigma_T,  \\ z(x,0) & =  & v(x)  & \mbox{in } \Omega,
\end{array}
\right. \label{E225}
\end{equation}
satisfies $\|z(\cdot,T) - \varphi(y_d)\|_{L^2(\Omega)} \leq \epsilon \cdot \exp{(\alpha)}.$ We finish the proof as in Theorem \ref{T1}, by selecting the control
\begin{equation}
u(x) = \varphi^{-1}(v(x)), \label{E230}
\end{equation}
that belongs to $H_0^1(\Omega)
\cap L^\infty(\Omega)$, thanks to the properties of $\varphi$.
\end{proof}

\section*{Acknowledgment}
The author is grateful to professors I. Peral and S. Segura de Le\'on for helpful comments about the references 
\cite{ADP08} and \cite{DGS07}.


\begin{thebibliography}{10}

\bibitem{ADP08}
B.~Abdellaoui, A.~Dall'Aglio, and I.~Peral.
\newblock Regularity and nonuniqueness results for parabolic problems
arising in some physical models, having natural growth in the gradient.
\newblock {\em J. Math. Pures Appl.}, 90(3):242--269, 2008.

\bibitem{Barbu00}
V.~Barbu.
\newblock Exact controllability of the superlinear heat equation.
\newblock {\em Appl. Math. Optim.}, 42(1):73--89, 2000.

\bibitem{DGS07}
A.~Dall'Aglio, D.~Giachetti, and S.~Segura de Le{\'o}n.
\newblock Nonlinear parabolic problems with a very general quadratic
gradient term.
\newblock {\em Differential Integral Equations}, 20(4):361--396, 2007.

\bibitem{DFCGBZ02}
A.~Doubova, E.~Fern{\'a}ndez-Cara, M.~Gonz{\'a}lez-Burgos, and E.~Zuazua.
\newblock On the controllability of parabolic systems with a nonlinear term
  involving the state and the gradient.
\newblock {\em SIAM J. Control Optim.}, 41(3):798--819, 2002.

\bibitem{FPZ95}
C.~Fabre, J.~P. Puel, and E.~Zuazua.
\newblock Approximate controllability of the semilinear heat equation.
\newblock {\em Proc. Royal Soc. Edinburgh}, 125A:31--61, 1995.

\bibitem{Fdez-Zuazua99}
L.~A. Fern\'andez and E.~Zuazua.
\newblock Approximate controllability for the semilinear heat equation
  involving gradient terms.
\newblock {\em J. Optimization Theory \& Appl.}, 101(2):307--328, 1999.

\bibitem{Fursi-Imanu96}
A.~V. Fursikov and O.~Y. Imanuvilov.
\newblock {\em Controllability of evolution equations}.
\newblock Research Institute of Mathematics, Seoul National University, Korea,
  1996.

\bibitem{Imanu-Yamamo03}
O.~Y. Imanuvilov and M. Yamamoto.
\newblock Carleman inequalities for parabolic equations in {S}obolev spaces of
  negative order and exact controllability for semilinear parabolic equations.
\newblock {\em Publ. Res. Inst. Math. Sci.}, 39(2):227--274, 2003.

\bibitem{Lad-Sol-Ura68}
O.~A. Ladyzhenskaya, V.~A. Solonnikov, and N.~N. Ural'tseva.
\newblock {\em Linear and Quasilinear Equations of Parabolic Type}.
\newblock Am. Math. Soc, Providence, R. I., 1968.

\bibitem{Lad85}
O.A. Ladyzhenskaya.
\newblock {\em The Boundary Value Problems of Mathematical Physics}.
\newblock Springer-Verlag, New York-Berlin-Heidelberg-Tokyo, 1985.

\bibitem{Leb-Rob95}
G.~Lebeau and L.~Robbiano.
\newblock Contr\^ole exact de l'\'equation de la chaleur.
\newblock {\em Comm. Partial Differential Equations}, 20(1-2):335--356, 1995.

\bibitem{Lei-Li-Lin08}
P.~Lei, Y. Li and P.~Lin.
\newblock Null controllability for a semilinear parabolic equation with
              gradient quadratic growth.
\newblock {\em Nonlinear Anal.}, 68 (1):73--82, 2008.

\bibitem{Lions71}
J.~L. Lions.
\newblock {\em Optimal Control of Systems Governed by Partial Differential
  Equations}.
\newblock Springer-Verlag, New York-Heidelberg-Berlin, 1971.

\bibitem{Lions90}
J.~L. Lions.
\newblock Remarques sur la contr\^olabilit\'e approch\'ee.
\newblock In {\em Jornadas Hispano-Francesas sobre Control de Sistemas
  Distribuidos}, pages 77--87, University of M\'alaga (Spain), 1991.

\end{thebibliography}
\end{document}